\setlist[enumerate]{label={\rm(\roman*)}}
\theoremstyle{plain}
\newtheorem{theorem}{Theorem}[section]
\newtheorem{proposition}[theorem]{Proposition}
\newtheorem*{proposition*}{Proposition}
\theoremstyle{definition}
\newtheorem{remark}[theorem]{Remark}
\newtheorem{example}[theorem]{Example}
\numberwithin{theorem}{section}
\numberwithin{equation}{section}
\DeclarePairedDelimiter\abs{\lvert}{\rvert}
\DeclarePairedDelimiter\nrm{\|}{\|}
\DeclarePairedDelimiter\set{\lbrace}{\rbrace}
\renewcommand\d{\mathrm{d}}
\newcommand\dd{\,\d}
\newcommand{\hra}{\hookrightarrow}
\newcommand{\hrastar}{%
  \mathrel{\mathop{\hra}\limits^{
    \vbox to 0ex{\kern-2\ex@
    \hbox{$\scriptstyle*$}\vss}}}}
\newcommand{\R}{\mathbb{R}}
\newcommand{\RR}{\mathcal R}
\newcommand{\MM}{\mathcal M}
\newcommand{\ip}{{\frac{1}{p}}}
\newcommand{\iq}{{\frac{1}{q}}}
\newcommand{\qp}{{\frac{q}{p}}}
\newcommand{\pq}{{\frac{p}{q}}}
\newcommand{\RM}{(\RR,\mu)}
\newcommand{\MRM}{\MM\RM}
\DeclareRobustCommand\onedot{\futurelet\@let@token\@onedot}
\def\@onedot{\ifx\@let@token.\else.\null\fi\xspace}
\def\eg{e.g\onedot} 
\def\ie{i.e\onedot} 
\def\cf{cf\onedot} 
\def\ri{r.i\onedot} 
\def\paragraph{\bigskip\@startsection{paragraph}{4}%
  \z@\z@{-\fontdimen2\font}%
  {\normalfont\bfseries}}
\title[Almost compact embeddings between Orlicz and Lorentz spaces]{Almost compact embeddings\\ between Orlicz and Lorentz spaces}
\author{V\'\i t Musil\textsuperscript{1}}
\address{\textsuperscript{1}%
Masaryk University,
Faculty of Informatics,
Department of Computer Science,
Botanick\'a 68a,
602 00 Brno,
Czech Republic}
\author{Lubo\v s Pick\textsuperscript{2}}
\address{\textsuperscript{2}%
Charles University,
Faculty of Mathematics and Physics,
Department of Mathematical Analysis,
So\-ko\-lo\-vsk\'a~83,
186~75 Praha~8,
Czech Republic}
\author{Jakub Tak\'a\v c\textsuperscript{2,3}}
\address{\textsuperscript{3}%
University of Warwick,
Mathematics Institute,
Zeeman Building,
CV4 7AL Coventry,
United Kingdom}
\email[Musil, corresponding author]{musil@fi.muni.cz}
\urladdr[Musil]{0000-0001-6083-227X}
\urladdr[Pick]{0000-0002-3584-1454}
\urladdr[Tak\'a\v c]{0000-0003-2158-7456}
\begin{document}

\begin{abstract}
    We characterize when an Orlicz space $L^A$ is almost compactly (uniformly absolutely continuously) embedded into a Lorentz space $L^{p,q}$ in terms of a balance condition involving parameters $p,q\in[1,\infty]$, and a Young function~$A$.
    In the course of the proof, we develop a new method based on an inequality of Young type involving the measure of level sets of a given function.
\end{abstract}

\date{}

\maketitle

\bibliographystyle{abbrvnat}

\section{Introduction and results statement}

We recall the notion of an \emph{almost-compact embedding}, sometimes also called uniformly absolutely continuous embedding, before we highlight its context in applications and formulate our result.

Fix, once and for all, a $\sigma$-finite measure space $(\RR,\mu)$.
Denote by $\MM = \MM(\RR,\mu)$ the set of all extended real valued $\mu$-measurable functions on $\RR$.
By saying that $(Y,\nrm{\cdot})$ is a (quasi)normed space of functions acting on $(\RR,\mu)$ we mean that $\nrm{\cdot}\colon\MM\to[0,\infty]$ is a functional satisfying the axioms of a (quasi)norm and $Y$ is the collection of all $f\in\MM$ such that $\nrm{f}<\infty$.
For a measurable set $E$, the characteristic function $\chi_E$ attains one on $E$ and zero otherwise.

\paragraph{Almost compactness}

A~set $\mathcal{S}\subset Y$ is \emph{almost-compact} if one has
\begin{equation*}
    \lim_{n\to \infty} \sup_{f\in \mathcal{S}} \nrm{f\chi_{E_n}} =0
\end{equation*}
for every sequence of sets $E_n\subset \RR$ with $\chi_{E_n}\to 0$ $\mu$-a.e.
An operator $T\colon X\to Y$ is \emph{almost-compact} if the image under $T$ of every bounded set in $X$ is almost compact in $Y$.
We will particularly focus on the cases when the operator $T$ is the identity operator.
Then we talk about \emph{almost-compact embeddings}.
Given another quasi-normed space $X$, we say that $X$ is \emph{almost-compactly embedded into $Y$}, written as $X\hrastar Y$, if the unit ball of $X$, denoted by $B_X$, is contained and almost-compact in $Y$.
Note that almost-compact sets are bounded, and so, in particular, $X\hrastar Y$ implies that $X\subset Y$ and that $B_X$ is bounded in $Y$, a fact that will henceforth be denoted by $X\hra Y$.

\paragraph{Significance of almost compactness}

The notion of almost-compact embedding serves as a general and rather universal tool used to obtain norm-compactness of operators and embeddings.
The general rule of thumb which works in all reasonable cases is that if a set in a function space is almost compact and compact with respect to convergence in measure, then it is norm-compact, \cf~\citet[Theorem~3.1]{Lenka}, see also~\citet{Fer:10}, or \citet[Chapter~1, Exercise~8]{BS}.
In relation to the compactness of certain kernel integral operators, this concept can already be traced to the work of \citet{LZ}, which takes advantage of almost-compactness.

Almost-compact sets in the space $L^1$ built upon a finite measure space are also sometimes called equi-integrable, and those are precisely the sets which are relatively weakly compact, see \eg \citet[Theorem~4.7.18]{Bog}.
Another characterisation of almost compact sets in $L^1$ can be obtained through de la Vall\'ee Poussin's theorem (\eg \citet[Chapter~1]{Rao:91}), which states that a set is almost-compact in $L^1$ if and only if it is contained in a strictly smaller Orlicz space.
This characterisation was one of the motivating factors in the early development of Orlicz spaces.

Recently, the main application of almost-compact embeddings seems to dwell in the study of compact Sobolev embeddings in various, very general, settings.
The principal reason for this is that Sobolev embeddings tend to be compact with respect to the convergence in measure, whence one step of the procedure comes for free.
For the classical Euclidean embeddings, we bring to attention the characterisations by \citet[Theorems~4.2 and~4.6]{Lenka2}, for analogous results in more general settings we refer to the work of \citet{Pus:06, Ker:08, Cur:07}.

Since Lorentz spaces arise as natural targets and domains in limiting and non-limiting Sobolev embeddings, characterisations of almost-compact embeddings involving these spaces are useful for proving compact Sobolev embeddings.
In particular, we are able to use present results to show that certain types of Sobolev embeddings can only ever be compact, \ie if a continuous embedding holds, it necessarily needs to be compact.

Different types of comparison of Lorentz and Orlicz spaces are of interest as both these types of spaces often arise in tasks in which Lebesgue spaces are insufficient, like limiting Sobolev embeddings, optimality results in critical cases etc., yet the two scales are ``incompatible'' in the sense that if a function space is both a Lorentz and an Orlicz space, then it is a Lebesgue space.
For example, \citet[Lemma~4.2]{Alb:17} prove a sufficient condition for the embedding $L^{A}\hra L^{p,q}$,
which they later use to investigate continuity properties and moduli of continuity of solutions to the $p$-Laplace equation.

\paragraph{The results}

Given a Young function $A$, we shall denote by $L^A$ the relevant Orlicz space on $(\RR,\mu)$.
For exponents $p,q\in (0,\infty]$, by $L^{p,q}$ we denote the two-parameter Lorentz space on $(\RR,\mu)$.
The precise definitions are recalled in Section~\ref{S:background}.
Our main aim is to characterize an~almost-compact embedding of an Orlicz space into a Lorentz space, namely \smash{$L^A \hrastar L^{p,q}$}.
\begin{theorem}\label{T:main}
    Suppose that $(\RR,\mu)$ is non-atomic and satisfies $\mu(\RR)<\infty$. Let $A$ be a Young function and assume $1\le q<p<\infty$. Then $L^A\hrastar L^{p,q}$ if and only if
        \begin{equation}\label{E:the-condition-fin}
			\int^\infty \biggl( \frac{t^p}{A(t)} \biggr)^{\frac{q}{p-q}} \frac{\d t}{t} < \infty.
		\end{equation}
\end{theorem}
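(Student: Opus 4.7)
The plan is to route both implications through the decreasing rearrangement, so that proving an almost-compact embedding reduces to showing
\[
\sup_{\nrm{f}_A \le 1} \|f^*\chi_{[0,e]}\|_{p,q} \longrightarrow 0 \qquad \text{as } e \to 0^+,
\]
via the Hardy--Littlewood inequality $\|f\chi_E\|_{p,q} \le \|f^*\chi_{[0,\mu(E)]}\|_{p,q}$. The two sides of the comparison will be expressed through the distribution function $\lambda_f(t) = \mu(\set{|f|>t})$: the Lorentz quasinorm via the layer-cake identity $\|f^*\chi_{[0,e]}\|_{p,q}^q = q\int_0^\infty t^{q-1}\min(e,\lambda_f(t))^{q/p}\dd t$, and the Orlicz modular via $\int A(|f|)\dd\mu = \int_0^\infty A'(t)\lambda_f(t)\dd t$.

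For the sufficiency, I would introduce the $f$-independent threshold $\tau = A^{-1}(1/e)$. Whenever $\nrm{f}_A \le 1$, Chebyshev gives $\lambda_f(\tau) \le 1/A(\tau) = e$, so $\min(e,\lambda_f(t)) = \lambda_f(t)$ for $t \ge \tau$. The low part of the layer-cake integral is then at most $e^{q/p}\tau^q/q = (\tau^p/A(\tau))^{q/p}/q$; the integrability hypothesis forces $t^p/A(t) \to 0$ at infinity, so this vanishes uniformly as $e \to 0^+$. For the high part I would apply H\"older with conjugate exponents $p/q$ and $p/(p-q)$:
\[
\int_\tau^\infty t^{q-1}\lambda_f(t)^{q/p}\dd t \le \biggl(\int_\tau^\infty A'(t)\lambda_f(t)\dd t\biggr)^{\!q/p}\!\biggl(\int_\tau^\infty t^{\frac{(q-1)p}{p-q}}A'(t)^{-\frac{q}{p-q}}\dd t\biggr)^{\!(p-q)/p}.
\]
The first factor is bounded by $1$ by the norm constraint, and in the second factor the universal Young-function estimate $A'(t) \ge A(t)/t$ (a direct consequence of $A(0)=0$ and convexity) collapses the integrand to exactly $(t^p/A(t))^{q/(p-q)}/t$. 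This is the Young-type inequality alluded to in the abstract, and the second factor also tends to $0$ by hypothesis.

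For the necessity I would reverse-engineer the extremal case of the H\"older step. Assuming the integral diverges, I would pick $\tau_n \to \infty$ and $T_n > \tau_n$ with $\int_{\tau_n}^{T_n}(t^p/A(t))^{q/(p-q)}\dd t/t = 1$, and construct $f_n$ whose distribution function is (a decreasing modification of) $\lambda_n(t) = c\,(t^p/A(t))^{q/(p-q)}/A(t)$ on $[\tau_n,T_n]$. A direct calculation exploiting $A'(t) \asymp A(t)/t$ shows $\int A(f_n)\dd\mu$ bounded and $\|f_n\|_{p,q}^q$ bounded below by positive constants, while $\mu(\operatorname{supp} f_n) \to 0$. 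Choosing $E_n = \operatorname{supp} f_n$ then exhibits unit-norm $L^A$ functions whose $L^{p,q}$ mass refuses to escape from a shrinking set, contradicting $L^A \hrastar L^{p,q}$.

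The principal obstacle is finding and exploiting the correct Young-type H\"older inequality: a naive bound $f^*(s) \le A^{-1}(1/s)$ would yield only the weaker sufficient condition $\int^\infty (t^p/A(t))^{q/p}\dd t/t < \infty$, so the extra leverage coming from the conjugate exponent $p/(p-q)$ is what makes the characterisation sharp. Secondary issues include making $\tau$ depend only on $e$ and not on $f$ so that the sufficiency bound is uniform, verifying that the optimising $\lambda_n$ is genuinely decreasing on $[\tau_n, T_n]$, and the passage between $A'$ and $A(t)/t$, which for a general Young function only holds as a one-sided inequality---fortunately in precisely the direction needed above.
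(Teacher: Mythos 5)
Your sufficiency argument is correct and offers an elegant, elementary alternative to the paper's approach. The paper deploys the Young-type inequality of \cite[Proposition 2.1]{MPT} (stated at the end of Section~\ref{S:background}) with a truncation $G_r$ of $G(t)=t^{p/q}$, which yields an \emph{additive} decomposition with a free parameter $\lambda$; you instead obtain a \emph{multiplicative} decomposition from H\"older's inequality with exponents $p/q$ and $p/(p-q)$ after introducing the $f$-independent Chebyshev threshold $\tau=A^{-1}(1/e)$. Both routes ultimately hinge on the universal estimate $A(t)\le ta(t)$, both produce the same controlling tail integral, and both exploit that $\lim_{t\to\infty}t^p/A(t)=0$ follows from \eqref{E:the-condition-fin} by a dyadic monotonicity argument. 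Your version is, if anything, more self-contained.

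The necessity direction, on the other hand, has a genuine gap. The paper does not prove necessity by a direct construction: it routes through the equivalence of \ref{en:OL-emb}, \ref{en:OL-mod} and \ref{en:OL-int} in Theorem~\ref{T:embeddings-orlicz-to-lorentz}~\ref{en:convex}, where the step \ref{en:OL-mod}$\Leftrightarrow$\ref{en:OL-int} is delegated to Sawyer's characterization of weighted Hardy inequalities over non-increasing functions \cite[Remark~(i), p.~148]{Saw:90}. Your extremiser $\lambda_n(t)\asymp(t^p/A(t))^{q/(p-q)}/A(t)$ on $[\tau_n,T_n]$ does make $\|f_n\|_{p,q}^q$ bounded below by a positive constant, but the modular $\int_\RR A(|f_n|)\dd\mu=\int_0^\infty a(t)\lambda_n(t)\dd t$ picks up, on $[\tau_n,T_n]$, a factor $ta(t)/A(t)$ relative to the normalised integral $\int_{\tau_n}^{T_n}(t^p/A(t))^{q/(p-q)}\,\dd t/t=1$. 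Since only the one-sided bound $ta(t)\ge A(t)$ holds universally, this gives the modular a uniform \emph{lower} bound; to make it bounded \emph{above} one would need $a(t)\le C\,A(t)/t$, i.e., the $\Delta_2$ condition, which a general Young function need not satisfy. Your closing remark that the $A'\asymp A(t)/t$ passage works ``in precisely the direction needed above'' is therefore true only for the sufficiency step; in the necessity step it is needed in the opposite, non-universal direction. Separately, for $q>1$ the profile $\lambda_n$ need not be non-increasing: its logarithmic derivative equals
\begin{equation*}
\frac{pq}{p-q}-\frac{p}{p-q}\cdot\frac{ta(t)}{A(t)},
\end{equation*}
which can be positive, so the ``decreasing modification'' is not cosmetic — it changes both integrals in ways that must be quantified. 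To complete the necessity you should either carry out a genuinely sharp Sawyer-type construction (built from $f_n^*$ rather than $\lambda_{f_n}$ so that the modular $\int_0^\infty A(f_n^*(s))\,\dd s$ does not involve $a$ at all), or simply cite the known characterization of the Hardy inequality over monotone functions, as the paper does.
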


The sufficiency of condition~\eqref{E:the-condition-fin} is the main and most innovative result of the paper with a~rather involved proof.
This result moreover allows us to prove a notably much more useful statement that the almost compact embedding \smash{$L^{A}\hrastar L^{p,q}$} is \emph{equivalent} to the continuous embedding $L^A \hra L^{p,q}$ when $1\le q<p$.
This fact, complemented with another characterising condition expressed in terms of a~specific modular inequality, is the content of the following theorem.

For the sake of completeness, we also discuss the case when $\mu(R)=\infty$, in which situation only continuous embeddings make sense.
We apply the convention $1/\infty=0$ here.

\begin{theorem}\label{T:embeddings-orlicz-to-lorentz}
    Suppose that $(\RR,\mu)$ is non-atomic and non-trivial. Let $A$ be a Young function and $p,q\in[1,\infty]$.
    \begin{enumerate}[label={\rm\Alph*.}, ref={\rm\Alph*}, parsep=1ex]
        \item\label{en:convex} Suppose $1\leq q < p < \infty$ and consider the following statements.
        \begin{enumerate}
	       \item \label{en:OL-emb}
	       The embedding $L^A\hra L^{p,q}$ holds.
	       \item \label{en:OL-ace}
	       The embedding $L^A\hrastar L^{p,q}$ holds.
	       \item \label{en:OL-int}
	       It holds that
		      \begin{equation}\label{E:the-condition}
			     \int^\infty_{\frac{1}{\mu(\RR)}} \biggl( \frac{t^p}{A(t)} \biggr)^{\frac{q}{p-q}} \frac{\d t}{t} < \infty.
		      \end{equation}
	       \item \label{en:OL-mod}
	       There exists a positive constant $C$ such that the inequality
		      \begin{equation*}
				    \|f\|_{p,q} \le C\left(\int_{\RR} A(|f|)\dd\mu\right)^{\frac{1}{p}}
		      \end{equation*}
		     holds for every $f\in L^A$.
        \end{enumerate}
        The statements \ref{en:OL-emb}, \ref{en:OL-int} and \ref{en:OL-mod} are equivalent. If $\mu(\RR)<\infty$, then all of the statements are equivalent.
        \item\label{en:non-convex} If $1\leq p\leq q \leq \infty$ and $\mu(\RR)<\infty$, then the following statements are equivalent.
            \begin{enumerate}
                \item \label{en2:OL-acLeb}
	           The embedding $L^A\hrastar L^{p}$ holds.
	          \item \label{en2:OL-acLor}
	           The embedding $L^A\hrastar L^{p,q}$ holds.
	          \item \label{en2:OL-lim} It holds that $p<\infty$ and $\lim_{t\to \infty} {A(t)}/{t^p}=\infty$.
            \end{enumerate}
    \end{enumerate}
\end{theorem}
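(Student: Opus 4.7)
The plan is to handle Part~\ref{en:convex} by closing the cycle $\text{(iv)}\Rightarrow\text{(i)}\Rightarrow\text{(iii)}\Rightarrow\text{(iv)}$ and quoting Theorem~\ref{T:main} for $\text{(iii)}\Rightarrow\text{(ii)}$ in the finite-measure case. The implications $\text{(ii)}\Rightarrow\text{(i)}$ and $\text{(iv)}\Rightarrow\text{(i)}$ are routine: the former because almost-compact sets are bounded, the latter because $\|f\|_A\le 1$ forces $\int_{\RR} A(|f|)\dd\mu\le 1$ by the Luxemburg definition, which combined with~(iv) and homogeneity delivers $\|f\|_{p,q}\le C\|f\|_A$.

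The heart of Part~\ref{en:convex} is the modular inequality $\text{(iii)}\Rightarrow\text{(iv)}$. Write both sides in terms of the distribution function $\mu_f(t)=\mu(\{|f|>t\})$, namely
\begin{equation*}
    \|f\|_{p,q}^q = p\int_0^\infty t^{q-1}\mu_f(t)^{q/p}\dd t \quad\text{and}\quad \int_{\RR} A(|f|)\dd\mu = \int_0^\infty a(t)\mu_f(t)\dd t,
\end{equation*}
where $a$ denotes the right derivative of $A$, and apply H\"older with conjugate exponents $p/(p-q)$ and $p/q$ to the factorisation $t^{q-1}\mu_f(t)^{q/p}=[t^{q-1}a(t)^{-q/p}]\cdot[a(t)\mu_f(t)]^{q/p}$. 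This isolates the modular in the second factor, while the first reduces, via the convexity estimate $A(t)/t\le a(t)\le A(2t)/t$, to a constant multiple of $\int^\infty(t^p/A(t))^{q/(p-q)}\dd t/t$, finite by~\eqref{E:the-condition}. On finite-measure spaces the support bound for $f^\ast$ on $[0,\mu(\RR)]$ truncates the relevant range of $t$ from below, explaining the cut-off $1/\mu(\RR)$. For $\text{(i)}\Rightarrow\text{(iii)}$, I proceed by contraposition: if~\eqref{E:the-condition} diverges, build decreasing rearrangements $f_n^\ast$ with dyadic plateaus at levels $A^{-1}(1/s_k)$ calibrated so that $\int A(f_n^\ast)\dd s$ stays bounded while $\|f_n\|_{p,q}\to\infty$. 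Finally, $\text{(iii)}\Rightarrow\text{(ii)}$ on finite-measure spaces is Theorem~\ref{T:main}.

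Part~\ref{en:non-convex} reduces to an equi-integrability argument. Because $L^p\hra L^{p,q}$ whenever $q\ge p$ on a finite-measure space, almost compactness transfers and gives $\text{(i)}\Rightarrow\text{(ii)}$. For $\text{(iii)}\Rightarrow\text{(i)}$, set $B(K)=\inf_{t\ge K}A(t)/t^p$, which tends to infinity by hypothesis. For $f\in B_{L^A}$ and any measurable $E\subset\RR$,
\begin{equation*}
    \int_E|f|^p\dd\mu \le K^p\mu(E) + \frac{1}{B(K)}\int_{\RR} A(|f|)\dd\mu \le K^p\mu(E)+\frac{1}{B(K)},
\end{equation*}
and choosing first $K$ large and then $n$ with $\mu(E_n)$ small yields the equi-integrability in $L^p$. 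The converse $\text{(ii)}\Rightarrow\text{(iii)}$ is proved by contraposition: if $p=\infty$ the Lorentz target is either trivial (when $q<\infty$) or $L^\infty$, both precluding non-trivial almost-compact embeddings; if $\liminf_{t\to\infty}A(t)/t^p<\infty$, I test with $f_n=A^{-1}(1/\mu(E_n))\chi_{E_n}$ on disjoint shrinking $E_n$, so that $\|f_n\|_A=1$ while $\|f_n\chi_{E_n}\|_{p,q}\asymp A^{-1}(1/\mu(E_n))\mu(E_n)^{1/p}$ stays bounded below exactly under the complementary hypothesis.

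The principal obstacle is the calibrated H\"older split in $\text{(iii)}\Rightarrow\text{(iv)}$, which must turn~\eqref{E:the-condition} cleanly into the modular bound and requires careful treatment of the comparison $a(t)\asymp A(t)/t$ for non-doubling Young functions via the two-sided estimate $A(t)/t\le a(t)\le A(2t)/t$ together with a dilation of the integration variable. The extremal construction in $\text{(i)}\Rightarrow\text{(iii)}$ poses an analogous calibration challenge, ensuring that the normalising modular stays bounded while the Lorentz quasi-norm captures the full divergence.
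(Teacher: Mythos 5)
Your overall structure is sound, but it takes a genuinely different route from the paper's for the core equivalence of Part~\ref{en:convex}. The paper first establishes the slick equivalence $\ref{en:OL-emb}\Leftrightarrow\ref{en:OL-mod}$ via the rescaling trick $f_* = g_*/\varrho_A(g)$, which converts the homogeneous norm inequality to the modular one in a single step, and then proves $\ref{en:OL-int}\Leftrightarrow\ref{en:OL-mod}$ in \emph{both} directions by invoking Sawyer's characterisation of Hardy-type inequalities for non-increasing functions \cite[Remark~(i), p.~148]{Saw:90} as a black box. You instead close the cycle $\ref{en:OL-mod}\Rightarrow\ref{en:OL-emb}\Rightarrow\ref{en:OL-int}\Rightarrow\ref{en:OL-mod}$, and the chief novelty is that your $\ref{en:OL-int}\Rightarrow\ref{en:OL-mod}$ implication is a direct, elementary H\"older estimate. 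Let me record that this H\"older computation is correct: with the factorisation $t^{q-1}\mu_f(t)^{q/p} = \bigl[t^{q-1}a(t)^{-q/p}\bigr]\cdot\bigl[a(t)\mu_f(t)\bigr]^{q/p}$ and exponents $\tfrac{p}{p-q},\tfrac{p}{q}$, one gets
\begin{equation*}
    \|f\|_{p,q}^q \le q\left(\int_0^\infty t^{\frac{(q-1)p}{p-q}}a(t)^{-\frac{q}{p-q}}\,\d t\right)^{\frac{p-q}{p}}\left(\int_0^\infty a(t)\mu_f(t)\,\d t\right)^{\frac qp},
\end{equation*}
and the substitution $a(t)\ge A(t)/t$ turns the first factor's integrand into exactly $\bigl(t^p/A(t)\bigr)^{q/(p-q)}/t$. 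This gives a self-contained proof of one direction of what the paper delegates to Sawyer, which is a genuine gain in transparency. (Minor typo: the constant in your Lorentz distribution formula should be $q$, not~$p$.)

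However, your proposal has a real gap precisely where the paper leans hardest on Sawyer: the direction $\ref{en:OL-emb}\Rightarrow\ref{en:OL-int}$. You offer only the sketch ``build decreasing rearrangements with dyadic plateaus at levels $A^{-1}(1/s_k)$ calibrated so that $\int A(f_n^*)$ stays bounded while $\|f_n\|_{p,q}\to\infty$''. This is the right idea, but it is exactly where the hard work lives, and without spelling out how the levels, the plateau lengths, and the normalisation interact with the divergence of \eqref{E:the-condition} the argument cannot be checked. The paper avoids this construction entirely by first reducing $\ref{en:OL-emb}$ to $\ref{en:OL-mod}$ (through the $f_*=g_*/\varrho_A(g)$ normalisation) and then handing the necessity of the integral condition for the resulting Hardy inequality to Sawyer's theorem. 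If you want a self-contained proof, you either need to work out the extremal family in detail, or reproduce the $\ref{en:OL-emb}\Leftrightarrow\ref{en:OL-mod}$ reduction and then saturate your own H\"older inequality.

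A secondary remark: both you and the paper are somewhat brief on the finite-measure case of $\ref{en:OL-int}\Rightarrow\ref{en:OL-mod}$, which is genuinely delicate because condition \eqref{E:the-condition} only constrains $A$ for $t\ge 1/\mu(\RR)$, yet the H\"older integral runs over all of $(0,\infty)$. The paper handles this by modifying $A$ near zero; your remark that the support bound on $f^*$ ``truncates the range of $t$ from below'' does not by itself control the near-zero contribution of $\int_0 t^{q-1}\mu_f(t)^{q/p}\,\d t$ against the modular (this contribution scales differently in $f$), so some version of the modification-near-zero argument is still needed. For Part~\ref{en:non-convex}, your write-up is essentially the proof of the classical criterion that the paper simply cites from Rao--Ren \cite[Chapter~5, \S 5.3, Proposition~7]{Rao:91}; the de la Vall\'ee Poussin splitting and the characteristic-function test are correct and match the paper's approach in spirit.
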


\begin{remark}
Finiteness of the measure space for the characterization of $L^A \hrastar L^{p,q}$ is imperative.
In fact, if $\mu(\RR)=\infty$, then the embedding $L^{A} \hrastar L^{p,q}$ can never hold, which can be shown using a standard `traveling hill' argument.

The expression ${1}/{\mu(\RR)}$ in condition~\ref{en:convex}~\ref{en:OL-int} of Theorem~\ref{T:embeddings-orlicz-to-lorentz} is just a convenient way of distinguishing between the cases when $\mu(\RR)=\infty$ and $\mu(\RR)<\infty$.
If $\mu(\RR)<\infty$, then the value ${1}/{\mu(R)}$ can be replaced with any positive number, testing the convergence of the integral at infinity.
\end{remark}

We shall also provide a second set of results concerning almost-compact embeddings of Lorentz spaces into Orlicz spaces.
These statements are dual to those in Theorem \ref{T:embeddings-orlicz-to-lorentz}.
However, since some of the inequalities involve non-linear functionals, standard duality arguments cannot be used directly.

\begin{theorem}\label{T:embeddings-lorentz-to-orlicz}
    Suppose that $(\RR,\mu)$ is non-atomic and non-trivial. Let $A$ be a Young function and $r,s\in[1,\infty]$.
    \begin{enumerate}[label={\rm\Alph*.}, ref={\rm\Alph*}, parsep=1ex]
        \item\label{en:convex-dual} Suppose $1<r<s\le\infty$ and consider the following statements:
            \begin{enumerate}
                \item \label{en:OL-emb-dual}
	           The embedding $L^{r,s}\hra L^B$ holds.
	          \item \label{en:OL-ace-dual}
	           The embedding $L^{r,s}\hrastar L^B$ holds.
	          \item \label{en:OL-int-dual}
	           It holds that
		         \begin{equation}\label{E:the-condition-dual}
                     \int^{\infty}_{\frac{1}{\mu(\RR)}}\left(\frac{B(t)}{t^r}\right)^{\frac{s}{s-r}}\frac{dt}{t}<\infty,
		      \end{equation}
        in which we interpret the value $\frac{s}{s-r}$ as $1$ in the case when $s=\infty$.
	       \item \label{en:OL-mod-dual}
	          There exists a positive constant $C$      such that the inequality
		      \begin{equation*}
				    \left(\int_{\RR} B(|f|)\dd\mu\right)^{\frac{1}{r}}\le C\nrm{f}_{r,s}
		      \end{equation*}
		      holds for every $f\in  L^{r,s}(\RR,\mu)$.
            \end{enumerate}
        The statements \ref{en:OL-emb-dual}, \ref{en:OL-int-dual} and \ref{en:OL-mod-dual} are equivalent. If $\mu(\RR)<\infty$, then all of the statements are equivalent.
        \item \label{en-non-convex-dual}If $1\leq s\leq r < \infty$ and $\mu(\RR)<\infty$, then the following statements are equivalent.
            \begin{enumerate}
                \item \label{en2:OL-acLeb-dual}
	           The embedding $L^r\hrastar L^{B}$ holds.
	          \item \label{en2:OL-acLor-dual}
	           The embedding $L^{r,s}\hrastar L^{B}$ holds.
	          \item \label{en2:OL-lim-dual} It holds that $r>1$ and $\lim_{t\to \infty} {t^r}/{B(t)}=\infty$.
            \end{enumerate}
    \end{enumerate}
\end{theorem}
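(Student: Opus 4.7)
The plan is to establish Part A by showing (iv)$\Rightarrow$(i)$\Rightarrow$(iii)$\Rightarrow$(iv) directly, and to add (i)$\Rightarrow$(ii) under the finite-measure assumption via associate-space duality with Theorem~\ref{T:embeddings-orlicz-to-lorentz}. Part B will be treated by adapting the de la Vall\'ee Poussin characterization of equi-integrability.

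The easy implications are routine: (iv)$\Rightarrow$(i) follows by substituting $f/\lambda$ into the modular inequality and invoking the definition of the Luxemburg norm; (ii)$\Rightarrow$(i) is the general fact noted at the outset that almost-compact sets are bounded. For (i)$\Rightarrow$(iii), I would test the embedding on a dyadically scaled family $f_k=\sum_{j=0}^k c_j\chi_{E_j}$ with $\mu(E_j)$ geometric and $c_j$ chosen so that $\|f_k\|_{r,s}$ stays bounded; the forced boundedness of $\|f_k\|_{L^B}$ then collapses to convergence of the series that discretizes~\eqref{E:the-condition-dual}.

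The heart of Part~A is (iii)$\Rightarrow$(iv). I would use the level-set representations
\begin{equation*}
\int_{\RR} B(|f|)\dd\mu = \int_0^\infty B'(\tau)\,m_f(\tau)\dd\tau, \qquad \|f\|_{r,s}^s = r\int_0^\infty \tau^{s-1} m_f(\tau)^{s/r}\dd\tau,
\end{equation*}
where $m_f$ is the distribution function of $f$, and apply H\"older's inequality with exponents $p=s/(s-r)$ and $p'=s/r$, factorizing $B'(\tau)m_f(\tau)$ so that the $m_f$-side reproduces $\|f\|_{r,s}^r$. This yields
\begin{equation*}
\int_{\RR} B(|f|)\dd\mu \lesssim \Bigl(\int_0^\infty B'(\tau)^{s/(s-r)}\,\tau^{-r(s-1)/(s-r)}\dd\tau\Bigr)^{(s-r)/s} \|f\|_{r,s}^r,
\end{equation*}
and the remaining $B$-only integral is equivalent to \eqref{E:the-condition-dual} via the standard estimate $B'(\tau)\tau\le 2 B(\tau)$. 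The integrability issue at $\tau\to 0$, where condition~\eqref{E:the-condition-dual} is silent when $\mu(\RR)<\infty$, should be resolved by splitting at $\tau=1/\mu(\RR)^{1/r}$ and controlling the low-$\tau$ tail with the combined bound $m_f(\tau)\le \min(\mu(\RR),C\|f\|_{r,s}^r/\tau^r)$ coming from the weak-type embedding $L^{r,s}\hra L^{r,\infty}$.

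For (i)$\Rightarrow$(ii) under $\mu(\RR)<\infty$, I plan to use the duality $X\hrastar Y\Leftrightarrow Y'\hrastar X'$ valid for Banach function spaces with order-continuous norm. The associate of $L^{r,s}$ is $L^{r',s'}$ and that of $L^B$ is $L^{\widetilde B}$, so the statement $L^{r,s}\hrastar L^B$ is equivalent to $L^{\widetilde B}\hrastar L^{r',s'}$. With $p=r'$, $q=s'$, the hypothesis $1<r<s\le\infty$ translates to $1\le q<p<\infty$, so Theorem~\ref{T:embeddings-orlicz-to-lorentz}\ref{en:convex} applies and reduces the problem to $\int^\infty (t^{r'}/\widetilde B(t))^{s'/(r'-s')}\dd t/t<\infty$. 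A change of variables exploiting Young's relation $B^{-1}(t)\widetilde B^{-1}(t)\approx t$ should show this is equivalent to~\eqref{E:the-condition-dual}.

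For Part~B, I would establish (iii)$\Rightarrow$(i)$\Rightarrow$(ii)$\Rightarrow$(iii). The implication (iii)$\Rightarrow$(i) is the classical de la Vall\'ee Poussin argument adapted to Orlicz targets: if $t^r/B(t)\to\infty$, then given $\varepsilon>0$ one finds $K$ with $B(t)\le\varepsilon t^r$ for $t>K$, and for $f$ in the unit ball of $L^r$ and any sequence $E_n$ with $\chi_{E_n}\to 0$ a.e., splitting over $\{|f|\le K\}\cap E_n$ and $\{|f|>K\}\cap E_n$ gives the required uniform smallness. The implication (i)$\Rightarrow$(ii) is immediate from the inclusion $L^{r,s}\hra L^r$ for $s\le r$. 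For (ii)$\Rightarrow$(iii), a contradiction argument with a sequence $f_n=t_n\chi_{E_n}$, where $t_n$ lives on a scale along which $t_n^r/B(t_n)$ stays bounded and $\mu(E_n)\to 0$, provides the required counterexample to almost-compactness.

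The main obstacle I anticipate is the low-$\tau$ singularity in the H\"older step of (iii)$\Rightarrow$(iv): the truncation at $1/\mu(\RR)^{1/r}$ and the interplay of the weak-type bound with condition~\eqref{E:the-condition-dual} require careful Young function bookkeeping and this is where the paper's advertised Young-type inequality on level sets may be essential. A secondary obstacle is the equivalence of the two dual integral conditions, which also demands elementary but intricate Young function calculus.
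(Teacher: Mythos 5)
Your proposal is correct in outline but takes a genuinely different route from the paper in several places, and it correctly identifies, but does not fully resolve, the main technical obstruction.

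For Part~\ref{en:convex-dual}, the paper does not prove \ref{en:OL-int-dual}$\Rightarrow$\ref{en:OL-mod-dual} at all; instead it establishes \ref{en:OL-emb-dual}$\Leftrightarrow$\ref{en:OL-mod-dual} by a direct scaling argument on distribution functions, exploiting the homogeneity of degree $r$ hidden in the modular inequality: given $g$, it sets $f_*=g_*/(C\nrm{g}_{r,s})^r$, computes $\nrm{f}_{r,s}=1/C$, deduces $\varrho_B(f)\le1$ from the embedding via~\eqref{E:Orlicz-balls}, and unwinds. This is shorter and cleaner than the H\"older route, because it never touches the low-$\tau$ regime at all. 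Then \ref{en:OL-emb-dual}$\Leftrightarrow$\ref{en:OL-int-dual} is obtained purely by associate-space duality back to Theorem~\ref{T:embeddings-orlicz-to-lorentz}~\ref{en:convex} together with a Fubini computation converting $\int(t^p/\widetilde B(t))^{q/(p-q)}\,\d t/t$ into~\eqref{E:the-condition-dual}; this replaces both your dyadic testing step for \ref{en:OL-emb-dual}$\Rightarrow$\ref{en:OL-int-dual} and your H\"older step for \ref{en:OL-int-dual}$\Rightarrow$\ref{en:OL-mod-dual}. Your H\"older argument does produce the right exponents (with the prefactor $s$, not $r$, in the Lorentz distribution formula), and it works cleanly when $\mu(\RR)=\infty$, including $s=\infty$ via a $(1,\infty)$-H\"older split. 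However, the truncation you propose for $\mu(\RR)<\infty$ leaves a constant term $\mu(\RR)B(1/\mu(\RR)^{1/r})$ on the left that is \emph{not} dominated by $\nrm{f}_{r,s}^r$ when $\nrm{f}_{r,s}$ is small, since the modular inequality is not homogeneous; the paper's actual device at this point (for the analogous step, and for the duality step in Theorem~\ref{T:embeddings-orlicz-to-lorentz}) is to replace $B$ by an equivalent Young function whose behaviour near zero is rigged so that~\eqref{E:the-condition-dual} converges on all of $(0,\infty)$, and then transfer back via $L^B=L^{B_1}$. You should explicitly invoke this normalization rather than the pointwise bound on $m_f$, which by itself is insufficient. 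Your \ref{en:OL-emb-dual}$\Rightarrow$\ref{en:OL-ace-dual} plan via duality of almost-compact embeddings matches the paper; note that if you already run this dualization, the detour through dyadic test functions for \ref{en:OL-emb-dual}$\Rightarrow$\ref{en:OL-int-dual} becomes redundant, since duality hands you the equivalence with the Theorem~\ref{T:embeddings-orlicz-to-lorentz} integral condition directly. For Part~\ref{en-non-convex-dual}, the paper again dualizes to Theorem~\ref{T:embeddings-orlicz-to-lorentz}~\ref{en:non-convex} and reduces everything to showing $\lim_{t\to\infty}\widetilde B(t)/t^{r'}=\infty$ iff $\lim_{t\to\infty}t^r/B(t)=\infty$, via~\eqref{E:Young-basic-derivative-estimate} and~\eqref{E:integral-representation-of-young-conjugate}; your direct de la Vall\'ee Poussin-style argument and contradiction sequence avoid this conjugate computation, which is a legitimate alternative, but be sure your \ref{en2:OL-acLor-dual}$\Rightarrow$\ref{en2:OL-lim-dual} step also rules out the $r=1$ case (where $L^{r,s}=L^1$, $L^B$ must equal $L^1$, and $L^1\hrastar L^1$ fails), since the target statement includes the claim $r>1$.
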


\begin{remark}
The question of almost compact embeddings between Orlicz and Lorentz spaces $L^{p,q}$ for $p\in(0,1)$ is trivial and not very interesting.
In fact, $L^A\hrastar L^{p,q}$ holds for every Orlicz space $L^A$.
Indeed, assuming $\mu(\RR)<\infty$, it holds that $L^1 \hrastar L^{p,q}$ for every $p\in (0,1)$ and any $q\in (0,\infty]$, hence the claim follows by the fact that $L^A\hra L^1$ for every Orlicz space $L^A$.
On the other hand, there is no embedding of a Lorentz space $L^{r,s}$ with $r\in(0,1)$ and arbitrary $s$ into an Orlicz space.
\end{remark}

\begin{remark}
The assumption of non-atomicity in Theorems~\ref{T:main}, \ref{T:embeddings-orlicz-to-lorentz} and \ref{T:embeddings-lorentz-to-orlicz} is superfluous for some of the implications.
Namely, in Theorem~\ref{T:main}, the integral condition \eqref{E:the-condition-fin} implies the almost-compact embedding \smash{$L^A\hrastar L^{p,q}$} on any finite measure space.
Furthermore, the integral condition \ref{en:OL-int} of Theorem \ref{T:embeddings-orlicz-to-lorentz}~\ref{en:convex} implies the continuous embedding \ref{en:OL-emb} on any $\sigma$-finite measure space and the almost-compact embedding \ref{en:OL-ace} on any finite measure space.
Analogous considerations hold also for Theorem~\ref{T:embeddings-lorentz-to-orlicz}.
\end{remark}

\paragraph{Applications to Compact Sobolev Embeddings}

As an interesting application of our main results, we shall now show that the compactness of certain Sobolev embeddings follows from their continuous companions.
For Sobolev spaces and embeddings we use the conventions and notation by \citet{Cia:15}, restricted to the case when the measure involved is just the Lebesgue $n$-dimensional measure.
Firstly, suppose $\Omega$ is a sufficiently regular (\eg John) \emph{bounded} domain in $\R^n$ for some $n\geq 2$. Considering the first-order Sobolev embedding on Lorentz spaces, one has, for any $p\in(1,n)$ and $q\in [1,\infty]$,
    \begin{equation}\label{E:Sob-emb-optimal}
        W^1 L^{p,q} (\Omega) \hra L^{p^*\!,\,q} (\Omega),
    \end{equation}
where $p^*={np}/({n-p})$.
In the sequel, we will omit writing out the domain $\Omega$ for brevity.
Moreover, both of the spaces $L^{p,q}$ and $L^{p^*\!,\,q}$ are optimal amongst rearrangement-invariant spaces in the sense that if $X$ is a rearrangement-invariant space such that $W^1 X \hra L^{p^*\!,\,q}$, then $X \hra L^{p, q}$, and when $Y$ is a rearrangement-invariant space satisfying $W^1 L^{p,q} \hra Y$, then $L^{p^*\!,\,q}\hra Y$.
The embedding, as well as the optimality of the target space, can be found in \cite[Theorem 6.9]{Cia:15}.
The optimality of the domain space seems not to be stated explicitly in the literature but follows from \cite[Theorem 5.11]{Edm:00} in combination with the classical Hardy inequality.

\begin{proposition} \label{P:Sobolev-domain}
Suppose $p\in (1,n)$ and $q\in [1,p)$. Then, for any Young function $A$, the following are equivalent:
\begin{enumerate}
    \item \label{Enum:Sob-emb1} The continuous embedding
        \begin{equation*}
            W^{1} L^A \hra L^{p^*\!,\,q}
        \end{equation*}
        holds;
    \item \label{Enum:Sob-emb2} The compact embedding
        \begin{equation*}
            W^{1} L^A \hra\hra L^{p^*\!,\,q}
        \end{equation*}
        holds;
    \item The condition \eqref{E:the-condition} holds with $\mu(\mathcal R)=|\Omega|$.
     \end{enumerate}
\end{proposition}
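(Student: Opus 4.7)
The plan is to close the cycle (ii) $\Rightarrow$ (i) $\Rightarrow$ (iii) $\Rightarrow$ (ii). Compact embeddings being continuous, (ii) $\Rightarrow$ (i) is immediate. For (i) $\Rightarrow$ (iii), I invoke the optimality of $L^{p,q}$ as the rearrangement-invariant domain space for the Sobolev target $L^{p^*,q}$, recalled just before the proposition (from \citet{Edm:00} combined with the classical Hardy inequality), in conjunction with Theorem~\ref{T:embeddings-orlicz-to-lorentz}~\ref{en:convex}. The continuous Sobolev embedding in (i) then forces $L^A \hra L^{p,q}$, and since $1\le q<p<\infty$ and $|\Omega|<\infty$, the equivalence of \ref{en:OL-emb} and \ref{en:OL-int} gives~\eqref{E:the-condition} with $\mu(\RR)=|\Omega|$.

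The main direction is (iii) $\Rightarrow$ (ii). Theorem~\ref{T:embeddings-orlicz-to-lorentz}~\ref{en:convex}, resting on the sufficiency proved in Theorem~\ref{T:main}, first upgrades (iii) to the almost-compact embedding $L^A \hrastar L^{p,q}$. Next, since $L^A \hra L^1(\Omega)$ on the bounded domain $\Omega$, any bounded set in $W^1 L^A$ is bounded in $W^{1,1}(\Omega)$ and hence relatively compact in $L^1(\Omega)$ by the classical Rellich--Kondrachov theorem; in particular, it is relatively compact in measure. By the principle recalled in the introduction (\citet[Theorem~3.1]{Lenka}), a subset of $L^{p^*,q}$ that is relatively compact in measure and almost-compact in $L^{p^*,q}$ is norm-relatively-compact. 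It therefore suffices to establish the almost-compact Sobolev embedding $W^1 L^A \hrastar L^{p^*,q}$.

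The crux, and the main obstacle, is exactly this almost-compact Sobolev embedding. My plan is to exploit the classical reduction of Sobolev embeddings on John domains to one-dimensional Hardy-type operator inequalities through a Pólya--Szegő--Talenti rearrangement bound of the form
\[
    u^*(t) - u^*(|\Omega|/2) \le c_\Omega \int_t^{|\Omega|} |\nabla u|^*(s)\, s^{-1+\frac{1}{n}}\dd s, \qquad t\in(0,|\Omega|/2),
\]
available in the framework of \citet{Cia:15}. For $u\in B_{W^1 L^A}$ and $E_n\subset\Omega$ with $\chi_{E_n}\to 0$ almost everywhere, this bounds $\|u\chi_{E_n}\|_{p^*,q}$ by a lower-order term controlled by $|E_n|^{1/p^*}$ (arising from $u^*(|\Omega|/2)$, which is dominated by $\|u\|_{L^1}/|\Omega|$) plus the $L^{p^*,q}$-norm on $(0,|E_n|)$ of the Hardy-type functional applied to $|\nabla u|^*$. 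Boundedness of this Hardy operator from $L^A$ into $L^{p^*,q}$ is equivalent to the non-Sobolev embedding $L^A \hra L^{p,q}$ (this is precisely the one-dimensional reformulation that underlies the optimality cited above), and its almost-compact analogue is governed by $L^A \hrastar L^{p,q}$, which we have from (iii) via Theorem~\ref{T:embeddings-orlicz-to-lorentz}. Piecing these together yields $\sup_{u\in B_{W^1 L^A}} \|u\chi_{E_n}\|_{p^*,q}\to 0$, which is the required almost-compact Sobolev embedding and completes the proof.
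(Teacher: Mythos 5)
Your cycle $(ii)\Rightarrow(i)\Rightarrow(iii)$ matches the paper: trivial containment, then optimality of the domain $L^{p,q}$ in \eqref{E:Sob-emb-optimal} to reduce $(i)$ to $L^A\hra L^{p,q}$, then Theorem~\ref{T:embeddings-orlicz-to-lorentz}~\ref{en:convex}. The divergence, and the gap, is in $(iii)\Rightarrow(ii)$.

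The paper does not attempt to prove the compactness of the Sobolev embedding directly; it invokes \citep[Theorems~4.2 and~4.6]{Lenka2}, which characterise $W^1 X \hra\hra Y$ \emph{exactly} as $X\hrastar Y_{\mathrm{reduced}}$ (here $Y_{\mathrm{reduced}}=L^{p,q}$ when $Y=L^{p^*\!,\,q}$). This single citation absorbs the entire content you are trying to re-derive: the Pólya--Szegő--Talenti rearrangement reduction, the compactness in measure, and, crucially, the passage from an almost-compact embedding of the reduced spaces to almost-compactness of the associated one-dimensional Hardy-type operator. You instead assemble Rellich--Kondrachov plus \citep[Theorem~3.1]{Lenka} to reduce to proving $W^1 L^A\hrastar L^{p^*\!,\,q}$, and then assert that the Hardy operator's ``almost-compact analogue is governed by $L^A\hrastar L^{p,q}$''. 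That last sentence is precisely the non-trivial step, and you give no argument for it. It is not at all automatic that almost-compactness of an embedding $X\hrastar Z$ between one-dimensional rearrangement-invariant spaces transfers to the smallness of $\bigl\|\chi_{(0,|E_n|)}\int_{\cdot}^{|\Omega|} g^*(s)s^{-1+1/n}\dd s\bigr\|_{L^{p^*\!,\,q}}$ uniformly over the unit ball; establishing this requires a specific quantitative argument (this is what \citet{Lenka2} carries out, building on \citet{Ker:08}). As written, your proof has a genuine gap here: you should either supply that argument or simply cite \citep[Theorems~4.2 and~4.6]{Lenka2}, which would also make the Rellich--Kondrachov detour unnecessary.
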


\begin{proof}
Recalling that Lorentz and Orlicz spaces are rearrangement invariant and, using optimality of function spaces appearing in  \eqref{E:Sob-emb-optimal}, we see that \ref{Enum:Sob-emb1} is equivalent to $L^{A}\hra L^{p,q}$.
Furthermore, from the characterisation of compact Sobolev embeddings via almost-compact embeddings~\citep[Theorems 4.2 and 4.6]{Lenka2},
we have that \ref{Enum:Sob-emb2} holds if an only if $L^A\hrastar L^{p,q}$.
The statement then follows by Theorem~\ref{T:embeddings-orlicz-to-lorentz}~\ref{en:convex}.
\end{proof}

On the target side, by an analogous argument via Theorem \ref{T:embeddings-lorentz-to-orlicz} \ref{en:convex-dual}, we obtain the following result.

\begin{proposition}\label{P:Sobolev-target}
Suppose $p\in (1,n)$ and $q\in (p^*,\infty]$.
Then for any Young function $B$, the following are equivalent:
\begin{enumerate}
    \item The continuous embedding
        \begin{equation*}
            W^{1} L^{p,q} \hra L^B
        \end{equation*}
        holds;
    \item The compact embedding
        \begin{equation*}
            W^{1} L^{p,q} \hra\hra L^B
        \end{equation*}
        holds;
    \item The condition
        \begin{equation*}
        \int^\infty \left(\frac{B(t)}{t^{p^*}}\right)^{\frac{q}{q-p^*}} \frac{\d t}{t} < \infty
        \end{equation*}
        holds, in which we interpret the value ${q}/({q-p^*})$ as $1$ in the case when $q=\infty$.
\end{enumerate}
\end{proposition}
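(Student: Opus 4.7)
The plan is to mimic the proof of Proposition~\ref{P:Sobolev-domain}, now invoking the dual theorem and working on the target side of the Sobolev embedding. The starting point is the sharp Sobolev embedding $W^1 L^{p,q} \hra L^{p^*\!,\,q}$ from \eqref{E:Sob-emb-optimal}, together with the fact that $L^{p^*\!,\,q}$ is optimal among rearrangement-invariant spaces as a target for $W^1 L^{p,q}$. Combining this with the characterisation of compact Sobolev embeddings via almost-compact embeddings from \citet[Theorems 4.2 and 4.6]{Lenka2} (applied here on the target side), one obtains that $W^1 L^{p,q} \hra L^B$ is equivalent to $L^{p^*\!,\,q}\hra L^B$, and $W^1 L^{p,q} \hra\hra L^B$ is equivalent to $L^{p^*\!,\,q}\hrastar L^B$.

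With this reduction in place, the statement becomes an application of Theorem~\ref{T:embeddings-lorentz-to-orlicz}~\ref{en:convex-dual} with the parameters $r = p^*$ and $s = q$. One must check that the hypotheses of that part of the theorem are met: since $p\in(1,n)$ we have $p^* = np/(n-p) > 1$, and the assumption $q\in(p^*,\infty]$ is exactly the condition $r<s\le\infty$. Because $\Omega$ is bounded, $|\Omega|<\infty$, so all four statements of Theorem~\ref{T:embeddings-lorentz-to-orlicz}~\ref{en:convex-dual} are equivalent; in particular, the continuous embedding $L^{p^*\!,\,q}\hra L^B$, the almost-compact embedding $L^{p^*\!,\,q}\hrastar L^B$ and the integral condition \eqref{E:the-condition-dual} with $r=p^*$, $s=q$ are mutually equivalent. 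The latter reads
\begin{equation*}
\int^{\infty}_{1/|\Omega|}\left(\frac{B(t)}{t^{p^*}}\right)^{\frac{q}{q-p^*}}\frac{\d t}{t}<\infty,
\end{equation*}
which is clearly equivalent to the tail integrability in item (3) of the present statement, as the integrand is locally bounded away from the origin and only convergence at infinity is at stake. The convention $q/(q-p^*)=1$ when $q=\infty$ matches the one in Theorem~\ref{T:embeddings-lorentz-to-orlicz}.

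There is no genuine obstacle here beyond bookkeeping: the deep content is pushed into Theorem~\ref{T:embeddings-lorentz-to-orlicz} and into the optimality/compactness reductions already available in the literature. The only mildly delicate point is to verify that the optimality of the target space $L^{p^*\!,\,q}$ and the characterisation of compactness from \citet{Lenka2} apply in the desired direction, namely when one varies the target rather than the domain, which is the mirror image of the argument used for Proposition~\ref{P:Sobolev-domain}.
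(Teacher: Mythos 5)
Your proposal is correct and follows exactly the route the paper intends: the paper itself only states that Proposition~\ref{P:Sobolev-target} follows ``by an analogous argument via Theorem~\ref{T:embeddings-lorentz-to-orlicz}~\ref{en:convex-dual},'' and your write-up simply makes that analogy explicit, correctly reducing to $L^{p^*\!,\,q}\hra L^B$ and $L^{p^*\!,\,q}\hrastar L^B$ via target optimality in \eqref{E:Sob-emb-optimal} and the compactness characterisation of \citet{Lenka2}, and then applying the dual theorem with $r=p^*$, $s=q$. The hypothesis checks ($p^*>1$, $r<s\le\infty$, $|\Omega|<\infty$) are the right ones and are correctly verified.
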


\begin{example}\label{Ex:weak-Lebesgue}
As a particular special case of the previous proposition, one obtains a result for \emph{weak Lebesgue spaces} $L^{p,\infty}$. Indeed, given any $p\in (1, n)$ and any Young function $B$, one has
\begin{equation*}
    W^1 L^{p,\infty} \hra L^B
    \quad \text{if and only if }\quad
    W^1 L^{p,\infty} \hra\hra L^B.
\end{equation*}
\end{example}

\begin{remark}[Spaces near $L^1$]
In the limiting cases where the domain space is near $L^1$, that is $p=1$ in Propositions~\ref{P:Sobolev-domain} and~\ref{P:Sobolev-target} and Example~\ref{Ex:weak-Lebesgue}, several serious complications arise.
First, since $L^{1,q}$ for $q>1$ is not normed, one may not use any duality arguments.
Next, the characterisations of compact Sobolev embeddings via almost compact embeddings \cite[Theorems 4.2 and 4.6]{Lenka2} are not available.
Furthermore, even the ordinary Sobolev embeddings must be interpreted in a suitable way, \cf \citet{Tal:94}, which differs from the standard definition of an embedding of quasi-normed spaces.
\end{remark}

\begin{remark}[Spaces near $L^\infty$]
In the limiting cases where the target space is near $L^\infty$, that is $p=n$ in Propositions~\ref{P:Sobolev-domain} and~\ref{P:Sobolev-target} and Example~\ref{Ex:weak-Lebesgue}, we have the following.

If $q=1$, then the continuous embedding
\begin{equation*}
    W^1 L^{n,1} \hra L^\infty
\end{equation*}
holds~\cite{Ste:81}, but is not compact, see for instance~\cite[Theorem~4.2]{Cur:07} or~\cite[Theorem~1.2]{Ker:08}.
Moreover, both the domain and the target are optimal among rearrangement-invariant spaces, see \citep[Theorem~3.5]{CiP:98} for the embedding and optimality of the domain and \cite[Theorem 6.9]{Cia:15} for the optimality of the target.
By the same argument as in the proof of Proposition~\ref{P:Sobolev-domain}, we conclude that for every Young function $A$, the embeddings
\begin{equation*}
    W^1 L^A \hra L^\infty
    \quad\text{and}\quad
    W^1 L^A \hra\hra L^\infty
\end{equation*}
are equivalent and characterised by
\begin{equation*}
    \int^\infty \biggl( \frac{t^n}{A(t)} \biggr)^{\frac{1}{n-1}} \frac{\d t}{t} < \infty.
\end{equation*}
On the target side, every finite-valued Young function $B$ satisfies $L^B\not= L^\infty$ and hence $L^\infty\hrastar L^B$.
Therefore, using similar arguments as above, we conclude that
\begin{equation*}
    W^1 L^{n,1} \hra\hra L^B
    \quad\text{if and only if $B$ is finite-valued}.
\end{equation*}

If we consider the domain space to be $L^{n,q}$ with $q>1$, the situation is more complicated.
The optimal target space is \emph{not} a two-parameter Lorentz space, but the Lorentz-Zygmund space $L^{\infty,q,-1}$, \citep[Theorem 6.9]{Cia:15}.
Therefore our results on the target side cannot be used.
Unfortunately, even the domain space $L^{n,q}$ is not the optimal \ri space for the target space $L^{\infty,q,-1}$.
hence, neither of our results on the domain side is applicable.
The fact that $L^{n,q}$ is not the optimal \ri domain space for  $L^{\infty,q,-1}$ in a first-order Sobolev embedding is proved in the special case when $q=n$ in~\cite[Theorem~8.2]{Pic:98}, but the proof works almost verbatim for any $q>1$.
\end{remark}

In conclusion, when considering the limiting Sobolev embeddings our result does not provide the answers in many corner cases.
This opens an interesting question about the relationship between bounded and compact Sobolev embeddings involving Orlicz spaces in these situations.

\paragraph{Outline of the proofs}

We begin by establishing the sufficiency part of Theorem~\ref{T:main}, which has by far the most involved proof of all of the presented results.
The proof relies on a selection of new ideas and techniques, prime among them being the use of the recently discovered Young-type inequality~\citep[Proposition 2.1]{MPT}. The converse implication will follow from the proof of  Theorem~\ref{T:embeddings-orlicz-to-lorentz}~\ref{en:convex}.

To prove Theorem~\ref{T:embeddings-orlicz-to-lorentz}~\ref{en:convex}, we first show the equivalence of continuous embedding~\ref{en:OL-emb}
and modular condition~\ref{en:OL-mod}
and then the equivalence of integral condition~\ref{en:OL-int} and modular condition~\ref{en:OL-mod},
thus asserting the equivalence of \ref{en:OL-emb}, \ref{en:OL-int} and \ref{en:OL-mod}.
We show the result first in the case $\mu(\RR)=\infty$, and then describe the modifications that are necessary to obtain the proof when $\mu(\RR)<\infty$.
Trivially, almost compact embedding \ref{en:OL-ace} always implies its continuous companion~\ref{en:OL-emb}.
The fact that integral condition~\ref{en:OL-int} implies almost compact embedding~\ref{en:OL-ace} under the assumption that $\mu(\RR)<\infty$, is the content of the part of Theorem \ref{T:main} which has been already proved.
Note that one of the implications is not new, namely \ref{en:OL-int} $\Rightarrow$ \ref{en:OL-emb} can be found in the work of \citet[Lemma~4.2, Part~I]{Alb:17}.
In the case when $q=1$, the equivalence \ref{en:OL-emb} $\Leftrightarrow$ \ref{en:OL-ace} can be found in a more general form in our earlier work \cite[Theorem A.5]{MPT}.

The scheme of the proof for Theorem~\ref{T:embeddings-orlicz-to-lorentz} \ref{en:non-convex} is \ref{en2:OL-acLor} $\Rightarrow$ \ref{en2:OL-lim} $\Rightarrow$ \ref{en2:OL-acLeb} $\Rightarrow$ \ref{en2:OL-acLor}.
Some of these implications are trivial, and some are known.
Namely, the fact that limit condition~\ref{en2:OL-lim} is equivalent to the almost-compact embedding of an Orlicz space into a Lebesgue space \ref{en2:OL-acLeb} is a classical knowledge from the theory of Orlicz spaces, see e.g.~\citep[Chapter 5, Section 5.3, Proposition 7]{Rao:91}.
The fact that the almost-compact embedding \ref{en2:OL-acLor} implies the limit condition \ref{en2:OL-lim} is obtained via an elementary calculation, which we include for the sake of completeness.

The statements of Theorem \ref{T:embeddings-lorentz-to-orlicz} are, in a way, dual assertions to those we discussed above.
Some of the proofs are, however, still somewhat tricky as, to the best of our knowledge, there is no direct method of dualizing modular conditions as the expressions involved are \emph{not homogeneous} in~$f$.

\section{Prerequisite definitions and statements}
\label{S:background}

\paragraph{Measurable functions}
Given a function $f\in \MM$, we will firstly need its \emph{distribution function}, $f_*$, given by
\begin{equation*}
	f_*(\lambda)=\mu(\{x\in\RR:|f(x)|>\lambda\})
		\quad \text{for $\lambda\in[0,\infty]$},
\end{equation*}
and its \emph{non-increasing rearrangement}, $f^*$, given by
\begin{equation} \label{E:def-rearrangement}
	f^*(t) = \inf\set{\lambda\ge 0: f_*(\lambda) \le t}
		\quad\text{for $t\in[0,\infty]$}.
\end{equation}
Note that $f^*\colon [0,\infty] \to [0,\infty]$ is non-increasing and $|\{f^*> \lambda\}|=\mu(\{|f|>\lambda\})$ for all $\lambda\geq 0$. Here $|\cdot|$ denotes the Lebesgue measure on $\R$.

\paragraph{Generalized inverses}
Suppose $G\colon [0,\infty] \to [0,\infty]$ is non-decreasing.
We define the \emph{left-continuous inverse} of $G$ by
\begin{equation*}
    G^{-1}(t)=\inf\set{\tau\ge 0: G(\tau)\ge t}
	\quad\text{for $t\in[0,\infty]$.}
\end{equation*}
Observe that $G^{-1}$ is always left-continuous, non-decreasing on $(0,\infty]$ and $G^{-1}\bigl(G(t)\bigr) \le t$ for $t\in[0,\infty]$.
Similarly, if $F\colon[0,\infty] \to [0,\infty]$ is non-increasing, we define its left-continuous inverse by
\begin{equation*}
    F^{-1}(t)=\sup\{\tau\geq 0: \eta(\tau)\ge t\}
        \quad\text{for $t\in[0,\infty]$}.
\end{equation*}

\paragraph{Young functions}
We say that $A\colon [0,\infty] \to [0,\infty]$ is a Young function if it is convex, non-decreasing, left-continuous and satisfies $A(0+)=0$ and $A(\infty)=\infty$.
Note that these properties imply that $A$ is finite on a right neighbourhood of zero and $A(t)\to\infty$ as $t\to \infty$.
Given a Young function $A$, there exists a unique right-continuous function $a\colon [0,\infty) \to [0,\infty]$ satisfying
\begin{equation*}
    A(t)=\int_0^t a(s) \d s\quad \text{for all $t\in[0,\infty]$.}
\end{equation*}
We extend this function by defining $a(\infty)=\infty$ and call $a$ the \emph{right-continuous derivative} of $A$, a fact denoted by $A'=a$. From the fact that $a$ is non-decreasing, one immediately infers that
\begin{equation}\label{E:Young-basic-derivative-estimate}
    A(t)\leq t a(t) \leq A(2t) \quad \text{for all $t\in [0,\infty]$.}
\end{equation}
The \emph{Young conjugate} $\widetilde A$ of $A$ is given by
\begin{equation*}
    \widetilde A(t) = \sup\left\{st-A(s): s\in[0,\infty]\right\}\quad\text{for $t\in[0,\infty]$,}
\end{equation*}
and it satisfies
\begin{equation}\label{E:integral-representation-of-young-conjugate}
    \widetilde A(t)=\int_0^t a^{-1}(s) \d s\quad \text{for $t\in[0,\infty]$,}
\end{equation}
where $a^{-1}$ is the left-continuous inverse of $a$. Then $\widetilde A$ is also a~Young function whose Young conjugate is $A$.

\paragraph{Orlicz spaces}
Given $f\in \MM$ and a Young function $A$, the expression
\begin{equation*}
    \varrho_A(f)=\int_{\mathcal R} A(|f|) \dd \mu
\end{equation*}
is called the \emph{Orlicz modular} (of $f$ with respect to $A$).
We say that $f_n \to 0$ in the \emph{modular convergence} if the Orlicz modular of $f_n$ approaches zero as $n\to \infty$. The \emph{Luxemburg norm} is given by
\begin{equation*}
	\| f \|_{A}
		= \inf\biggl\{
				\lambda>0: \int_\RR A\Bigl(\frac{f}{\lambda}\Bigr)\dd \mu \leq 1
			\biggr\}
		\quad \text{for $f\in \MM$.}
\end{equation*}
The \emph{Orlicz space} is then the collection of functions $f\in \MM$ such that $\| f \|_{L^A}<\infty$ equipped with the Luxemburg norm.
It holds that
\begin{equation}
\label{E:Orlicz-balls}
	\|f\|_A \le 1
		\quad\text{if and only if}\quad
	\int_{\RR} A(|f|)\dd\mu \le 1.
\end{equation}
By Fubini's theorem, the Orlicz modular of $f\in\MM$ may be expressed via its distribution function~as
\begin{equation} \label{E:Orlicz-distribution-formula}
    \int_{\RR} A(\abs{f})\dd \mu = \int_{0}^{\infty}f_{*}(t)a(t)\dd t.
\end{equation}

Inequalities~\eqref{E:Young-basic-derivative-estimate} imply that if the Young function $A_0$ is defined by
\begin{equation*}
    A_0(t)=\int_{0}^{t}\frac{A(s)}{s}\dd s
    \quad\text{for $t\in[0,\infty)$,}
\end{equation*}
then $L^A=L^{A_0}$ with equivalent norms.
Since all of the statements we shall be proving are invariant under equivalent norms, we can always assume that the right continuous derivative of a given Young function is continuous.
Moreover, if $\mu(\RR)<\infty$ and $A$, $A_1$ are Young functions that are equal near infinity, then $L^A=L^{A_1}$ with equivalent norms.
Therefore, we can also assume any behaviour of the given Young function near zero.

\paragraph{Lorentz spaces}
Given two exponents $p,q\in(0,\infty]$, the \emph{Lorentz functional} is defined as
\begin{equation}
	\nrm{f}_{p,q}
		= \begin{cases}\displaystyle
				\left(
					\int_{0}^{\infty} \bigl[t^\ip f^{*}(t)\bigr]^q \frac{\dd t}{t}
				\right)^\iq
					& \text{if $q\in(0,\infty)$},
					\\[\bigskipamount]
				\sup\limits_{t\in(0,\infty)} t^\ip f^{*}(t)
					& \text{if $q=\infty$},
			\end{cases}
\end{equation}
for $f\in \MM$,
and the Lorentz space is given by $L^{p,q}=\set{f\in\MM: \nrm{f}_{{p,q}}<\infty}$. The Lorentz functional is equivalent to a norm provided $p=q=1$ or $p=q=\infty$ or $p\in(1,\infty)$ and $q\in[1,\infty]$. In these cases, $L^{p,q}$ is a Banach space. For any $p\in(0,\infty]$, one also has $L^{p,p}=L^p$. Note that $L^{p,q}\subset L^{p,r}$ provided $q\leq r$ irrespective of the underlying measure space \cite{BS}.

Lorentz norm of $f\in\MM$ can also be expressed via its distribution function as
\begin{equation} \label{E:Lorentz-distribution-formulas}
	\nrm{f}_{L^{p,q}}
		= \left(q\int_{0}^{\infty} f_*(t)^\qp t^{q-1}\dd t\right)^\iq.
\end{equation}

\paragraph{A special Young-type inequality}
We shall need a special type of the Young inequality discovered recently in~\cite[Proposition 2.1]{MPT}. We state it explicitly here, as it is the principal tool used in our~proof.
\looseness=-1
\begin{proposition*}
Let $A$ and $G$ be Young functions and let $v\colon (0,\infty)\to [0,\infty]$ be measurable. Denote by $g$ the right-continuous derivative of $G$. Then
\begin{equation} \label{E:OL-inequality}
	\int_{0}^\infty G^{-1}\bigl(f_*(t)\bigr)v(t)\dd t
		\le \int_{0}^\infty g^{-1} \left( \frac{v(t)}{\lambda a(t)} \right)v(t) \dd t
			+ \lambda \int_{0}^\infty A(\abs{f})
\end{equation}
for any $\lambda>0$ and any measurable $f$.
\end{proposition*}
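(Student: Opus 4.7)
The plan is to reduce~\eqref{E:OL-inequality} to a pointwise inequality at each $t\in(0,\infty)$ and then integrate, converting the resulting $\int_0^\infty a(t) f_*(t) \d t$ into the Orlicz modular of $f$ with respect to $A$ via~\eqref{E:Orlicz-distribution-formula}.

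The pointwise inequality I aim for is the elementary bound
\begin{equation*}
    uV \le G(u) + V g^{-1}(V) \qquad \text{for all } u, V \in [0,\infty].
\end{equation*}
This is a one-line strengthening of the classical Young inequality $uV \le G(u) + \widetilde G(V)$: the integral representation~\eqref{E:integral-representation-of-young-conjugate} gives $\widetilde G(V) = \int_0^V g^{-1}(\tau) \d\tau$, and since $g^{-1}$ is non-decreasing we get $\widetilde G(V) \le V g^{-1}(V)$ for free.

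With this in hand, I would fix $t>0$ and apply the displayed inequality with $u = G^{-1}\bigl(f_*(t)\bigr)$ and $V = v(t)/(\lambda a(t))$, then multiply through by $\lambda a(t)$. Using the standard fact $G\bigl(G^{-1}(s)\bigr) \le s$, which follows at once from left-continuity of $G$ together with the definition of the left-continuous inverse, one arrives at
\begin{equation*}
    G^{-1}\bigl(f_*(t)\bigr) \, v(t) \le \lambda a(t) f_*(t) + v(t) g^{-1}\biggl(\frac{v(t)}{\lambda a(t)}\biggr).
\end{equation*}
Integrating over $t\in(0,\infty)$ and rewriting $\int_0^\infty a(t) f_*(t) \d t = \int_\RR A(|f|) \dd\mu$ via~\eqref{E:Orlicz-distribution-formula} then produces~\eqref{E:OL-inequality} on the nose.

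The only mildly delicate point, and thus the main obstacle, is the behaviour on the exceptional set where $a(t)=0$, so that the quotient $v(t)/(\lambda a(t))$ is formally indeterminate. Under the natural conventions $0\cdot\infty = 0$ and $g^{-1}(\infty)=\infty$ the pointwise bound remains correct on this locus — both sides vanish when $v(t)=0$, while the right-hand side is $+\infty$ when $v(t)>0$, rendering the inequality trivial — so these loci contribute nothing after integration. Beyond this conventional bookkeeping the whole argument is a straightforward consequence of classical Young combined with the Orlicz distribution identity, with no further technical hurdles.
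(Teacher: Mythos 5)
Your overall strategy — reduce to a pointwise Young-type inequality, multiply through by $\lambda a(t)$, and convert $\int_0^\infty a f_* \dd t$ to the modular via~\eqref{E:Orlicz-distribution-formula} — is sound, and the core pointwise bound $uV \le G(u) + V g^{-1}(V)$ together with $G\bigl(G^{-1}(s)\bigr)\le s$ (which does hold for left-continuous non-decreasing $G$) is correctly deployed on the locus $\{a>0\}$. The one genuine gap is in your treatment of the set $\{a(t)=0,\ v(t)>0\}$, where the multiplication step collapses to $0\le 0$ and yields nothing, so a separate argument is needed. Your replacement argument is not correct as stated: you assert that the right-hand side is $+\infty$ because $g^{-1}(\infty)=\infty$, but that is \emph{not} a free convention — $g^{-1}(\infty)$ is the definite quantity $\inf\{\tau : g(\tau)=\infty\}$, which is \emph{finite} whenever the Young function $G$ jumps to $+\infty$ at a finite point (e.g. $G(t)=t$ for $t\le1$ and $G(t)=\infty$ for $t>1$ gives $g^{-1}(\infty)=1$). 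In that case, on the set $\{a=0,\,v>0\}$, the right-hand integrand equals $v(t)\,g^{-1}(\infty)<\infty$, not $+\infty$, and the left-hand integrand need not vanish either, so these loci do not ``contribute nothing.''

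Fortunately the gap is easy to close. When $G$ is not finite-valued, one has $G(\tau)=\infty$ for every $\tau>g^{-1}(\infty)$, hence $G^{-1}(s)\le g^{-1}(\infty)$ for every $s\in[0,\infty]$; therefore on $\{a=0,\,v>0\}$ one obtains directly
\begin{equation*}
  G^{-1}\bigl(f_*(t)\bigr)\,v(t) \;\le\; g^{-1}(\infty)\,v(t) \;=\; v(t)\,g^{-1}\!\left(\frac{v(t)}{\lambda a(t)}\right) + \lambda\,a(t)\,f_*(t),
\end{equation*}
which is exactly the needed pointwise bound. With this observation substituted for the ``$+\infty$'' claim, your proof goes through in full.
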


\section{Proofs}

\begin{proof}[Proof of Theorem \ref{T:main}]
We shall prove that the condition~\eqref{E:the-condition-fin} implies the embedding $L^A\hrastar L^{p,q}$. The converse implication will be established within the course of the proof of Theorem~\ref{T:embeddings-orlicz-to-lorentz}~\ref{en:convex}.

Firstly, let us consider the case when $A$ attains the value infinity.
Then $L^A=L^\infty$ and $L^A\hrastar L^{p,q}$ obviously holds, since $p<\infty$ and, owing to~\citet[Theorem~5.2]{Lenka}, $L^\infty \hrastar X$ for any r.i.~space $X\neq L^{\infty}$.
Thus, we can suppose that $A$ attains only finite values, which in turn means that the right-continuous derivative $a$ of $A$ also attains only finite values on $[0,\infty)$.
Moreover, as mentioned above, we can (and will) assume that $a$ is continuous.
We will be done if we show
\begin{equation}\label{E:embedding-orlicz-to-lorentz-3}
		\lim_{r\to0_+} \sup \set*{\nrm{f}_{p,q}^{q} : \nrm{f}_{L^A}\le 1,\ f_*(0) \le r} =0.
\end{equation}
Indeed, the condition that $f_*(0)\leq r$ is equivalent to requiring that $f$ is supported on a set of measure not exceeding $r$, therefore \eqref{E:embedding-orlicz-to-lorentz-3} implies $L^A\hrastar L^{p,q}$.

Denote $G(t)=t^\pq$ for $t\ge 0$. Given $r>0$, define the function $G_r$ by
\begin{equation*}
    G_r(t) =
        \begin{cases}
            t^\pq &\text{if $t\in\bigl[0, r^\qp\bigr]$}
                \\
            \infty &\text{if $t\in\bigl(r^\qp,\infty\bigr]$.}
        \end{cases}
\end{equation*}
Then the inverse $G_r^{-1}$ of $G_r$ satisfies
\begin{equation*}
    G_r^{-1}(\tau) =
        \begin{cases}
            \tau^\qp &\text{if $\tau\in[0, r]$}
                \\
            r^\qp &\text{if $\tau\in(r,\infty]$,}
        \end{cases}
\end{equation*}
the left-continuous derivative $g_r$ of $G_r$ reads as
\begin{equation*}
    g_r(t) =
        \begin{cases}
            \pq t^{\pq-1}
                &\text{if $t\in\bigl[0, r^\qp\bigr]$}
                \\
            \infty
                &\text{if $t\in\bigl(r^\qp,\infty\bigr]$,}
        \end{cases}
\end{equation*}
and, finally, the left-continuous inverse $g_r^{-1}$ of $g_r$ obeys
\begin{equation*}
    g_r^{-1}(\tau) =
        \begin{cases}
            \left(\qp\tau\right)^{\frac{q}{p-q}}
                &\text{if $\tau\in\bigl[0,\pq r^{1-\qp}\bigr]$}
                \\[7pt]
            r^\qp
                &\text{if $\tau\in\bigl(\pq r^{1-\qp},\infty\bigr]$.}
        \end{cases}
\end{equation*}
Fix $r>0$ and $f\in\MRM$ supported on a set of measure not exceeding $r$ and satisfying $\nrm{f}_A\le1$. Note that then, in particular, $\int_\RR A(|f|)<\infty$.
We then have $f_*(t)\le r$ for every $t\ge 0$, and, consequently, $G^{-1}(f_*)=G_r^{-1}(f_*)$.
Hence,
\begin{align*}
    \nrm{f}_{p,q}^{q} & = q\int_{0}^{\infty} f_*(t)^\qp t^{q-1}\dd t = q\int_{0}^{\infty} G_r^{-1}(f_*(t)) t^{q-1}\dd t.
\end{align*}
By inequality~\eqref{E:OL-inequality}, applied to $G=G_r$ and $v(t)=t^{q-1}$, we thus get
\begin{equation}\label{E:embedding-orlicz-to-lorentz-4}
    \nrm{f}_{p,q}^{q} \le q\int_{0}^{\infty} g_r^{-1}\left(\frac{t^{q-1}}{\lambda a(t)}\right)t^{q-1}\dd t + \lambda q \int A(|f|)\quad\text{for every $\lambda>0$.}
\end{equation}
Fix $\varepsilon\in(0,\infty)$. We take $\lambda>0$ such that $\lambda q  < \varepsilon$.
Then, by~\eqref{E:Orlicz-balls}, we also have
\begin{equation}\label{E:embedding-orlicz-to-lorentz-5}
    \lambda q \int_\RR A(|f|) < \varepsilon.
\end{equation}
This $\lambda$ will be considered fixed from now on.
We shall now concentrate on the first summand of the right-hand side of inequality~\eqref{E:embedding-orlicz-to-lorentz-4}.
For $r>0$, we split the set $(0,\infty$) into two sets
\begin{equation*}
    S_r = \set*{t\in(0,\infty) : \frac{t^{q-1}}{\lambda a(t)} \le \pq r^{1-\qp}} \quad \text{and} \quad B_r=(0,\infty)\setminus S_r
\end{equation*}
and we estimate the integrals over these sets independently.
Due to the definition of $g_r^{-1}$, we have
\begin{align*}
    \int_{S_r}g_r^{-1}\left(\frac{t^{q-1}}{\lambda a(t)}\right)t^{q-1}\dd t
        = \int_{S_r}\left(\qp\frac{t^{q-1}}{\lambda a(t)}\right)^{\frac{q}{p-q}}t^{q-1}\dd t
        = c_{p,q,\lambda} \int_{S_r}\left(\frac{t^{q-1}}{a(t)}\right)^{\frac{q}{p-q}}t^{q-1}\dd t,
\end{align*}
where $c_{p,q,\lambda}$ is the constant involving only the parameters $p$, $q$ and $\lambda$.
Next, $S_r$ is contained in $(\inf S_r, \infty)$ and $a(t)\ge A(t)t$, and therefore we have
\begin{align*}
    \int_{S_r}g_r^{-1}\left(\frac{t^{q-1}}{\lambda a(t)}\right)t^{q-1}\dd t
        \le c_{p,q,\lambda} \int_{\inf S_r}^{\infty} \left(\frac{t^{q}}{A(t)}\right)^{\frac{q}{p-q}}t^{q-1}\dd t
        = c_{p,q,\lambda} \int_{\inf S_r}^{\infty} \left(\frac{t^{p}}{A(t)}\right)^{\frac{q}{p-q}}\frac{\d t}{t}.
\end{align*}
The sets $S_r$ are increasing in $r$ with respect to the set inclusion, and one has
$\lim_{r\to 0_+} \inf S_r = \infty$.
Indeed, if this was not true, then one would necessarily have ${t^{q-1}}/{a(t)}=0$ for some $t>0$, but that would contradict the fact that $a$ is finite valued.
Therefore, owing to \eqref{E:the-condition-fin},
\begin{equation}\label{E:embedding-orlicz-to-lorentz-7}
    \lim_{r\to0_+} \int_{S_r}g_r^{-1}\left(\frac{t^{q-1}}{\lambda a(t)}\right)t^{q-1}\dd t
        \le c_{p,q,\lambda} \lim_{R\to\infty}\int_{R}^{\infty} \left(\frac{t^{p}}{A(t)}\right)^{\frac{q}{p-q}}\frac{\d t}{t}
        = 0.
\end{equation}
To estimate the integral over $B_r$ define
\begin{equation*}
    \eta(t)=\sup_{s\in[t,\infty)}\frac{s^{q-1}}{a(s)}
    \quad\text{for $t\in[0,\infty)$},
\end{equation*}
and $\eta(\infty)=\lim_{t\to\infty}\eta(t)$.
Since $a$ is continuous, $\eta$ is also continuous and nonincreasing.
If we denote $\tau=\tfrac{\lambda p}{q} r^{1-\qp}$, we have
\begin{equation*}
    B_r \subseteq \set*{t\in(0,\infty):\eta(t)\ge \tau}
        \subseteq \left(0, \eta^{-1}(\tau)\right],
\end{equation*}
where $\eta^{-1}$ is the left-continuous inverse of $\eta$. Therefore
\begin{align*}
\int_{B_r}g_r^{-1}\left(\frac{t^{q-1}}{\lambda a(t)}\right)t^{q-1}\dd t
    & = r^\qp\int_{B_r}t^{q-1}\dd t
      \le r^\qp \int_{0}^{\eta^{-1}(\tau)} t^{q-1}\dd t
      = \frac{r^\qp}{q}\eta^{-1}\left(\frac{\lambda p}{q} r^{1-\qp}\right)^q.
\end{align*}
We claim that
\begin{equation}\label{E:embedding-orlicz-to-lorentz-8}
    \lim_{r\to 0_+} r^\qp\eta^{-1}\left(\frac{\lambda p}{q} r^{1-\qp}\right)^q = 0.
\end{equation}
This can be equivalently rewritten, using the change of variables $\tau=\tfrac{\lambda p}{q} r^{1-\qp}$ as
\begin{equation*}
    \lim_{\tau\to 0_+} \tau^{\frac{q}{p-q}}\eta^{-1}(\tau)^q = 0.
\end{equation*}
Let us now denote $\sigma=\eta^{-1}(\tau)$.
If $\sigma$ is bounded as $\tau\to 0_+$, we are done.
Therefore, in the sequel, we assume that $\sigma\to\infty$ as $\tau\to 0_+$.
By the continuity of $\eta$, we have that $\tau=\eta(\sigma)$.
Therefore, in order to prove~\eqref{E:embedding-orlicz-to-lorentz-8}, it suffices to show that
\begin{equation} \label{E:lim-with-sigma}
    \lim_{\sigma\to \infty} \eta(\sigma)\sigma^{p-q} = 0.
\end{equation}
Now, by the definition of $\eta$, we have
\begin{equation} \label{E:lim-with-sigma-bound}
\lim_{\sigma\to \infty} \eta(\sigma)\sigma^{p-q}
    =  \lim_{\sigma\to \infty} \sigma^{p-q} \sup_{t\ge \sigma}\frac{t^{q-1}}{a(t)}
    \le \lim_{\sigma\to \infty} \sup_{t\ge \sigma}\frac{t^{p-1}}{a(t)}.
\end{equation}
Since the integral in~\eqref{E:the-condition-fin} is convergent, it holds that $\lim_{t\to\infty}{t^{p}}/{A(t)}=0$ or equivalently, thanks to inequalities~\eqref{E:Young-basic-derivative-estimate}, $\lim_{t\to\infty}{t^{p-1}}/{a(t)}=0$.
Therefore, also
\begin{equation*}
    \lim_{\sigma\to \infty} \sup_{t\ge \sigma}\frac{t^{p-1}}{a(t)} = 0,
\end{equation*}
and~\eqref{E:embedding-orlicz-to-lorentz-8} follows by this and~\eqref{E:lim-with-sigma-bound}.
In conclusion, we showed
\begin{equation}\label{E:estimate-of-Br}
    \lim_{r\to0_+}\int_{B_r}g_r^{-1}\left(\frac{t^{q-1}}{\lambda a(t)}\right)t^{q-1}\dd t=0.
\end{equation}
Since $S_r\cup B_r=(0,\infty)$ for every $r>0$, we obtain, owing to the combination of~\eqref{E:embedding-orlicz-to-lorentz-7} and~\eqref{E:estimate-of-Br} that
\begin{equation*}
    \lim_{r\to 0_+} \int_{0}^{\infty} g_r^{-1}\left(\frac{t^{q-1}}{\lambda a(t)}\right)t^{q-1}\dd t = 0.
\end{equation*}
We now find $r$ small enough so that
\begin{equation}\label{E:embedding-orlicz-to-lorentz-9}
     q\int_{0}^{\infty} g_r^{-1}\left(\frac{t^{q-1}}{\lambda a(t)}\right)t^{q-1}\dd t < \varepsilon.
\end{equation}
Then it follows from~\eqref{E:embedding-orlicz-to-lorentz-4}, \eqref{E:embedding-orlicz-to-lorentz-5} and~\eqref{E:embedding-orlicz-to-lorentz-9} that
\begin{equation*}
    \|f\|_{p,q}^q<2\varepsilon.
\end{equation*}
Since $f$ was arbitrary, this implies~\eqref{E:embedding-orlicz-to-lorentz-3}. This establishes that $L^A\hrastar L^{p,q}$ and completes the proof.
\end{proof}

\begin{proof}[Proof of Theorem \ref{T:embeddings-orlicz-to-lorentz}]
We begin by proving part~\ref{en:convex}.
Since we have already shown sufficiency of the condition \eqref{E:the-condition-fin} in Theorem \ref{T:main}, and since \ref{en:OL-ace} trivially implies \ref{en:OL-emb}, it suffices to show the equivalences \ref{en:OL-emb} $\Leftrightarrow$ \ref{en:OL-mod} and \ref{en:OL-int} $\Leftrightarrow$ \ref{en:OL-mod}.

Assume first that $\mu(\RR)=\infty$. We first show the equivalence of~\ref{en:OL-emb} and~\ref{en:OL-mod}. Suppose that~\ref{en:OL-mod} holds and let $g$ be a non-zero function in $L^A$. Set $f={g}/{\nrm{g}_A}$. By the definition of the Luxemburg norm, one has $\int_{\RR}A(|f|)\dd\mu\le 1$, whence~\ref{en:OL-mod} implies that $\nrm{f}_{p,q}\le C$, or
\begin{equation*}
    \nrm{g}_{p,q} \le C\nrm{g}_A,
\end{equation*}
which yields~\ref{en:OL-emb}.

Now assume that~\ref{en:OL-emb} holds with the constant of the embedding denoted by $C$ and let $g$ be a non-zero function in $L^A$.
If $\varrho_A(g)=\infty$, then~\ref{en:OL-mod} holds trivially and there is nothing to prove.
Suppose, thus, that $\varrho_A(g)<\infty$.
Since the function ${g_*}/{\varrho_A(g)}$ is non-increasing on $(0,\infty)$ and $(\RR,\mu)$ is non-atomic, there exists a function $f\in\MM(\RR,\mu)$ such that $f_*={g_*}/{\varrho_A(g)}$.
Then, by~\eqref{E:Orlicz-distribution-formula},
\begin{equation*}
    \varrho_A(f) = \int_{0}^{\infty}a(t)f_*(t)\dd t = \frac{1}{\varrho_A(g)}\int_{0}^{\infty}a(t)g_*(t)\dd t = \frac{\varrho_A(g)}{\varrho_A(g)}=1.
\end{equation*}
Thus, by the definition of the Luxemburg norm, $\nrm{f}_{L^A}\le1$. Consequently, by~\ref{en:OL-emb}, one has $\nrm{f}_{p,q}\le C$. Therefore,
\begin{align*}
    \nrm{g}_{p,q} 
    = \left(q\int_{0}^{\infty}g_*(t)^{\frac{q}{p}}t^{q-1}\dd t\right)^{\frac1q}
    = \left(q\int_{0}^{\infty}\varrho_A(g)^{\frac{q}{p}}f_*(t)^{\frac{q}{p}}t^{q-1}\dd t\right)^{\frac1q}
    = \varrho_A(g)^{\frac1p}\nrm{f}_{p,q}\le C\varrho_A(g)^{\frac1p},
\end{align*}
and~\ref{en:OL-mod} follows.

We will now show the equivalence of~\ref{en:OL-int} and~\ref{en:OL-mod}. We first note that~\ref{en:OL-mod} can be rewritten as
\begin{equation*}
    \left(q\int_{0}^{\infty}f_*(t)^{\frac{q}{p}}t^{q-1}\dd t\right)^{\frac pq}
    \le C^p \int_{0}a(t)f_*(t)\dd t
\end{equation*}
for every $f\in\MM(\RR,\mu)$. This inequality is in turn equivalent to saying that
\begin{equation*}
    \left(q\int_{0}^{\infty}\varphi(t)^{\frac{q}{p}}t^{q-1}\dd t\right)^{\frac pq}
    \le C^p \int_{0}a(t)\varphi(t)\dd t
\end{equation*}
for every non-negative and non-increasing $\varphi$ on $(0,\infty)$. By~\cite[Remark~(i), page~148]{Saw:90}, this holds if and only if~\ref{en:OL-int} is satisfied. This completes the proof in the case $\mu(\RR)=\infty$. If $\mu(\RR)<\infty$, we modify the function $A$ near zero in order that
\begin{equation}\label{E:the-condition-prop-near-zero}
        \int_{0} \biggl( \frac{t^p}{A(t)} \biggr)^{\frac{q}{p-q}} \frac{\d t}{t} < \infty
\end{equation}
without changing the corresponding Orlicz space, and then follow the same argumentation as in the case when $\mu(\RR)=\infty$.

We continue by proving part~\ref{en:non-convex}.
We first recall that the equivalence of \ref{en2:OL-acLeb} and \ref{en2:OL-lim} is an immediate consequence of the classical result \cite[Chapter 5, Section 5.3, Proposition 7]{Rao:91}. Furthermore, \ref{en2:OL-acLeb} implies \ref{en2:OL-acLor} as, by our assumption on $q$, we have $L^p\hra L^{p,q}$. So, as $L^{p,q} \hra L^{p,\infty}$, it is enough to show that $L^A \hrastar L^{p,\infty}$ implies \ref{en2:OL-lim}.

Assume, thus, that $L^A \hrastar L^{p,\infty}$. Since $L^{\infty,\infty}=L^{\infty}$ and  $L^A\hrastar L^{\infty}$ is false for any Young function $A$, we must have $p<\infty$.
As $(\RR,\mu)$ is non-atomic, for each $t\in [0,\mu(\RR)]$, there is some $\mu$-measurable set $E_t$ such that $\mu(E_t)=t$. Define $f_t\in\MM$ by
\begin{equation*}
    f_t=\chi_{E_t} A^{-1}(\tfrac{1}{t}).
\end{equation*}
One has $\chi_{E_t}\to0$ $\mu$-a.e.~on~$\RR$ as $t\to0_+$. Thus, owing to $L^A\hrastar L^{p,\infty}$, we have
\begin{equation*}
    \lim_{t\to0_+} \sup_{\nrm{f}_{A}\le1}\nrm{f\chi_{E_t}}_{p,\infty}=0.
\end{equation*}
It is easily checked, see \eg~\cite[Eq.~9.23]{Kra:61}, that $\| f_t \|_{L^A}=1$ for each $t\in [0,\mu(\RR)]$. Hence, in particular,
\begin{equation*}
    \lim_{t\to0_+} \nrm{f_t\chi_{E_t}}_{p,\infty}=0.
\end{equation*}
But
\begin{equation*}
    \lim_{t\to 0_+} \| f_t\chi_{E_t} \|_{{p,\infty}}
    =\lim_{t\to 0} \sup_{s\in (0,\infty)} s^{\frac{1}{p}} A^{-1}(\tfrac{1}{t}) \chi_{(0,t)}(s)
    =\lim_{t\to 0_+} t^{\frac{1}{p}} A^{-1}(\tfrac{1}{t}),
\end{equation*}
so combining the estimates we get
\begin{equation*}
    \lim_{t\to 0_+} t^{\frac{1}{p}} A^{-1}(\tfrac{1}{t})=0.
\end{equation*}
Using the definition of the left-continuous inverse and properties of Young functions, this is readily seen to be equivalent to \ref{en2:OL-lim}.
\end{proof}

\begin{proof}[Proof of Theorem~\ref{T:embeddings-lorentz-to-orlicz}]
We start with Part~\ref{en:convex-dual}.
First, we show the equivalence of \ref{en:OL-emb-dual}, \ref{en:OL-int-dual} and \ref{en:OL-mod-dual} in the case $\mu(\RR)=\infty$.
We begin with the equivalence of \ref{en:OL-emb-dual} and \ref{en:OL-mod-dual}.
Assume that~\ref{en:OL-mod-dual} holds and let $f\in L^{r,s}$.
Assume that $f$ is a non-zero function and set $g={f}/{(C\nrm{f}_{r,s})}$.
Then~\ref{en:OL-mod-dual} implies that
\begin{equation*}
    \int_{\RR}B(|g|)\dd\mu \le 1,
\end{equation*}
whence $\nrm{g}_B\le1$, by the definition of the Luxemburg norm.
In other words, $\nrm{f}_B\le C\nrm{f}_{r,s}$, and statement~\ref{en:OL-emb-dual} follows.

Now assume that~\ref{en:OL-emb-dual} holds and denote by $C$ the constant of the embedding. Let $g\in L^{r,s}$ be a~non-zero function.
Since ${g_*}/{(C\nrm{g}_{r,s})^r}$ is a non-increasing function on $(0,\infty)$, there exists $f\in\MM(\RR,\mu)$ such that $f_*={g_*}/{(C\nrm{g}_{r,s})^r}$.
Then
\begin{equation*}
    \nrm{f}_{r,s}^s = s\int_{0}^{\infty}f_*(t)^{\frac{s}{r}}t^{s-1}\dd t
    = s \int_{0}^{\infty} \frac{g_*(t)^{\frac sr}}{C^s\nrm{g}_{r,s}^s}t^{s-1}\dd t = \frac{1}{C^s},
\end{equation*}
that is $\nrm{f}_{r,s}={1}/{C}$.
Thus, by~\ref{en:OL-emb-dual}, we have $\nrm{f}_B\le1$.
By the definition of the Luxemburg norm, one has $\varrho_B(f)\le 1$, or equivalently $\int_{0}^{\infty}b(t)f_*(t)\dd t\le 1$.
Consequently,
\begin{equation*}
    \int_{0}^{\infty}b(t)\frac{g_*(t)}{(C\nrm{g}_{r,s})^r}\dd t\le 1,
\end{equation*}
that is,
\begin{equation*}
    \int_{0}^{\infty}b(t)g_*(t)\dd t\le (C\nrm{g}_{r,s})^r,
\end{equation*}
which reads as
\begin{equation*}
    \left(\int_{\RR}B(g)\dd \mu\right)^{\frac1r}\le C\nrm{g}_{r,s},
\end{equation*}
and~\ref{en:OL-mod-dual} follows.

We continue by showing the equivalence of~\ref{en:OL-emb-dual} and~\ref{en:OL-int-dual}. Given $r,s$ such that $1<r<s\le\infty$, we set $p=r'$ and $q=s'$. By duality,~\ref{en:OL-emb-dual} is equivalent to $L^{\widetilde B}\hra L^{p,q}$. By Theorem \ref{T:embeddings-orlicz-to-lorentz} \ref{en:convex} (note that $1\le q<p<\infty$), this holds if and only if
\begin{equation}\label{E:embedding-orlicz-to-lorentz-nonconvex-dual-1}
    \int_{0}^\infty \biggl( \frac{t^p}{\widetilde B(t)} \biggr)^{\frac{q}{p-q}} \frac{\d t}{t} < \infty.
\end{equation}
Owing to~\eqref{E:Young-basic-derivative-estimate} and~\eqref{E:integral-representation-of-young-conjugate}, condition~\eqref{E:embedding-orlicz-to-lorentz-nonconvex-dual-1} reads as
\begin{equation}\label{E:embedding-orlicz-to-lorentz-nonconvex-dual-2}
    \int_{0}^\infty \biggl( \frac{t^p}{tb^{-1}(t)} \biggr)^{\frac{q}{p-q}} \frac{\d t}{t} < \infty.
\end{equation}
Since, by Fubini's theorem, we have
\begin{align*}
    \int_{0}^\infty \biggl( \frac{t^p}{tb^{-1}(t)} \biggr)^{\frac{q}{p-q}} \frac{\d t}{t}
    &\approx \int_{0}^\infty t^{\frac{(p-1)q}{p-q}-1}\int_{b^{-1}(t)}^{\infty} \tau^{-\frac{q}{p-q}-1}\d\tau\d t
        \\
    &= \int_{0}^\infty \tau^{-\frac{q}{p-q}-1} \int_0^{b(\tau)}t^{\frac{(p-1)q}{p-q}-1}\d t\d\tau
        \\
    &\approx \int_{0}^\infty \tau^{-\frac{q}{p-q}-1} b(\tau)^{\frac{(p-1)q}{p-q}}\d\tau
        \\
    &\approx \int_{0}^\infty \tau^{-\frac{q}{p-q}-1-\frac{(p-1)q}{p-q}} B(\tau)^{\frac{(p-1)q}{p-q}}\d\tau
        \\
    &=\int_{0}^{\infty}\left(\frac{B(t)}{t^r}\right)^{\frac{s}{s-r}}\frac{dt}{t}<\infty,
\end{align*}
in which `$\approx$' denotes equality up to a multiplicative constant.
Note that this computation works also for $s=\infty$, in which case we, once again, interpret the value ${s}/{(s-r)}$ as $1$.
Consequently, condition~\eqref{E:embedding-orlicz-to-lorentz-nonconvex-dual-1} coincides with~\eqref{E:the-condition-dual}, whence~\ref{en:OL-emb-dual} is equivalent to~\ref{en:OL-int-dual}, as desired.

Thus, we see that if $\mu(\RR)=\infty$ the statements \ref{en:OL-emb-dual}, \ref{en:OL-int-dual} and \ref{en:OL-mod-dual} are equivalent.
By appropriately modifying the function $B$ near zero without changing its behaviour near infinity, we obtain the equivalence also in the case $\mu(\RR)<\infty$.
This is analogous to the proof of Theorem~\ref{T:embeddings-orlicz-to-lorentz}.

Since \ref{en:OL-ace-dual} implies~\ref{en:OL-emb-dual}, it remains to show that if $\mu(\RR)<\infty$, then also~\ref{en:OL-emb-dual} implies~\ref{en:OL-ace-dual}. 
A~simple duality argument shows that~\ref{en:OL-emb-dual} is equivalent to $L^{\widetilde B}\hra L^{r',s'}$, and~\ref{en:OL-ace-dual} is equivalent to $L^{\widetilde B}\hrastar L^{r',s'}$.
Hence the claim follows from the equivalence of the statements~\ref{en:OL-emb} and~\ref{en:OL-ace} of Theorem~\ref{T:embeddings-orlicz-to-lorentz}~\ref{en:convex}.

Finally, we show Part~\ref{en-non-convex-dual}.
This assertion follows from Theorem~\ref{T:embeddings-orlicz-to-lorentz}~\ref{en:non-convex} and the fact that, for $p\in(1,\infty)$, $\lim_{t\to \infty} {\widetilde B(t)}/{t^{p'}}=\infty$ is equivalent to $\lim_{t\to \infty} {t^p}/{B(t)}=\infty$.
We show one of the implications as the other is analogous. Suppose that
\begin{equation}\label{E:lim-for-duality}
    \lim_{t\to \infty} {t^p}/{B(t)}=\infty    
\end{equation}
and assume, for contradiction, that $\lim_{t\to \infty} {\widetilde B(t)}/{t^{p'}}=\infty$ does not hold.
Using relation~\eqref{E:Young-basic-derivative-estimate} for $\widetilde B$, we obtain that there is some $K>0$ such that $b^{-1}(t)<Kt^{p'-1}$ for ever $t>0$.
From the definition of the left-continuous inverse, for every $t>0$, there exists some $\tau\ge 0$ such that
\begin{equation*}
    b(\tau)\geq t
    \quad \text{and}\quad
    \tau\leq K t^{p'-1}.
\end{equation*}
Therefore, since $b$ is non-decreasing, we conclude that $b(Kt^{p'-1})\geq t$ for all $t>0$,
contradicting~\eqref{E:lim-for-duality} once we use inequalities~\eqref{E:Young-basic-derivative-estimate} once again.
\end{proof}

\section*{Funding}

This work was supported by:
\begin{itemize}
\item \href{http://dx.doi.org/10.13039/501100001824}{Czech Science Foundation}, grant no.~\href{https://starfos.tacr.cz/en/projekty/GA23-04720S?query=sevyaadgulyq}{23-04720S};
\item \href{http://dx.doi.org/10.13039/100007397}{Charles University}, project no.~327321.
\end{itemize}

\end{document}